\newtheorem{lemma}{\bf Lemma}[section]
\newtheorem{proposition}{\bf Proposition }[section]%
\newtheorem{theorem}{\bf Theorem}[section]
\numberwithin{equation}{section} \theoremstyle{plain}
\theoremstyle{definition}
\newtheorem{example}{Example}[section]
\newcommand{\E}{\mathsf{E}}
\newcommand{\Q}{\mathsf{Q}}
\newcommand{\why}[1]
    {
    \vskip 0.1in\noindent
        \colorbox{green}
            {
            \begin{minipage}{4.8in}
            {#1}
            \end{minipage}
            }\\ \noindent
    }
\begin{document}

\title[] {The Euler-Maruyama approximations for the CEV model.}

\author{V. Abramov}
\address{School of Mathematical Sciences\\  Monash
University, Building 28\\ Clayton Campus, Wellington road, Victoria
3800\\ Australia} \email{Vyacheslav.Abramov@sci.monash.edu.au;
vabramov126@gmail.ru}

\author{F. Klebaner}
\address{School of Mathematical Sciences\\  Monash
University, Building 28\\ Clayton Campus, Wellington road, Victoria
3800\\ Australia} \email{fima.klebaner@sci.monash.edu.au}
 \thanks{ Research was supported by the Australian Research
Council Grant DP0881011.}

\author{R. Liptser}
\address{Department of Electrical Engineering Systems,
Tel Aviv University, 69978 Tel Aviv, Israel}
\email{liptser@eng.tau.ac.il; rliptser@gmail.com}

\keywords{Euler-Maruyama algorithm, non-Lipschitz diffusion, CEV
model, absorbtion, weak convergence} \subjclass{65C30, 60H20, 65C20}

\maketitle

\begin{abstract} The CEV model is given by the
stochastic differential equation $X_t=X_0+\int_0^t\mu
X_sds+\int_0^t\sigma (X^+_s)^pdW_s$,   $\frac{1}{2}\le p<1$. It
features a non-Lipschitz diffusion coefficient and gets absorbed at
zero with a positive probability. We show the weak convergence of
Euler-Maruyama approximations  $X_t^n$   to the process $X_t$, $0\le
t\le T$, in the Skorokhod metric. We give a new approximation  by
continuous processes which allows to relax some technical conditions
in the proof of weak convergence in \cite{HZa} done in terms of
discrete time martingale problem. We calculate ruin probabilities as
an example of such approximation. We establish that the ruin
probability evaluated by simulations  is not guaranteed to converge
to the theoretical one, because the point zero is a discontinuity
point of the limiting distribution. To establish such convergence we
use the Levy metric, and also confirm the convergence numerically.
Although the result is given for the specific model, our method
works in a more general case of non-Lipschitz diffusion with
absorbtion.
\end{abstract}

\maketitle
\section{\bf Introduction and the Main Result.}
\label{sec-1}

We consider the Constant Elasticity of Variance (CEV) model (e.g.
\cite{Cox}, \cite{DelShir}) defined by the It\^o equation with
respect to   a Brownian motion $W_t$
\begin{equation}\label{eq:1.0}
X_t=X_0+\int_0^t\mu X_sds+\int_0^t\sigma (X^+_s)^pdW_s,
\end{equation}
where $x^+=0\vee x$,    constants  $p\in\big[\frac{1}{2}, 1\big)$,
$\mu\in \mathbb{R}$, $\sigma>0$, and a positive initial condition
$X_0$. Denote by $\tau$ the first time of hitting zero
  $\tau=\inf\{t:X_t=0\}$.
It is  known that  zero is an absorbing state, e.g. \cite{ShigaWat},
and that $\mathsf{P}(\tau<\infty)>0$.

The aim of this paper is to prove validity of approximations of
expectations of some functionals of these diffusions by the
Euler-Maruyama scheme. For example, in finance this model represents
a price process and it is important to evaluate expectations of
payoffs depending on past prices $\E{\mathcal G}(X_{[0,T]})$; as
well as for evaluation of the ruin probability $\mathsf{P}(\tau\le
T)$ by simulations. In population modeling the  diffusion with
$p=\frac{1}{2}$ represents the size of a population and      is
known as Feller's branching diffusion. In this case $\tau$
represents the time to extinction of the population.

When exact theoretical expressions of such functionals exist then a
comparison of simulated and exact results show how good the
approximation is. However, in many cases exact expressions are not
available. Then one needs to justify such approximations. Weak
convergence established below assures convergence of expectations of
bounded and continuous functionals of simulated values to the exact
ones.

 To allow a larger
family of functionals to be approximated  we show  the weak
convergence of approximations in the Skorokhod metric rather than
uniform. We also consider approximation of the ruin probability
$P(\tau\le t)$, which is the expectation of   the past dependent and
discontinuous in this metric functional  $ I_{\{\tau\le T\}}.$

The fact that the diffusion coefficient is singular and
non-Lipschitz makes the analysis non-standard. Feller in
\cite{Feller} showed for the case   $p=1/2$ that the  solution of
the Fokker-Plank equation exists and is unique and gave its
fundamental solution. This fact is used    for evaluation of ruin
probability by claiming that for any $T$, $X_T$ has a density on
$(0,\infty)$, and in particular its distribution function is
continuous at any positive point.

 Previous various results on the Euler-Maruyama algorithm
 for different type
of diffusion models can be found in Kloeden and Platten
\cite{KlPl} and Milstein and Tretyakov \cite{MilT}. They are
concerned mostly with the approximation of the final value $X_T$,
rather than the whole trajectory $X_t$, $0\le t\le T$. When both
the drift and diffusion coefficients are Lipschitz and the
diffusion coefficient is nonsingular, the standard theory applies.
There is a body of literature on the topic of approximations, see
e.g. Bally and Talay \cite{BaTaL}, Bossy and Diop
\cite{BossyDiop}, Gy\"ongy \cite{G}, Gy\"ongy and Krylov
\cite{GKryl}, Halidias and Kloeden \cite{HalKl}, Higham, Mao and
Stuart \cite{HMS},  Hutzenthaler and Jentzen \cite{MHAJ}. Z\"ahle
\cite{HZa} considers the case when the classical
``Lipschitz-Lipschitz'' setting fails. However, as a rule, the
cases considered in the literature are such that the drift and
diffusion coefficients exclude the  absorbtion effect even when
the diffusion coefficient is singular such as in Bessel diffusion.

We chose to give the results for the particular model of CEV for the
sake of transparency, and because this model is of significance  in
applications. The reader will note that the proofs are sufficiently
involved already in this special case. However, our method and
result hold in a more general case of non-Lipschitz diffusion with
absorbtion.

Recall next the Euler-Maruyama scheme for the diffusion process
$X_t$. Taking  for simplicity
 equidistant partitions of $[0,T]$,
$ 0\equiv t^n_0<t^n_1<\ldots<t^n_{t_{n-1}}<t^n_n\equiv T, \
t^n_k-t^n_{k-1}\equiv\frac{T}{n} $,  it is defined by the
following recursion
\begin{equation}\label{eq:2.1}
\begin{aligned}
X^n_{t^n_0}&=X_0,
\\
X^n_{t^n_k}&= X^n_{t^n_{k-1}}+\mu  \frac{T}{n}
 (X^n_{t^n_{k-1}})^++
\sigma(X^{n+}_{t^n_{k-1}})^p\sqrt{\frac{T}{n}}\xi_k,
\\
X^n_t&=X^n_{t^n_{k-1}}, \quad t\in[t^n_{k-1}, t^n_k),  \quad
k=1,\ldots n,
\end{aligned}
\end{equation}
where $(\xi_k)_{k\ge 1}$ is i.i.d. sequence of $(0,1)$-Gaussian
random variables.

The process $X^n_{t^n_k}$ can be considered as a discrete time
semimartingale (see, e.g., Z\"ahle \cite{HZa}) and $X^n_t$ as a
continuous time semimartingale with discontinuous paths. It may seem
that there is not much difference between  discrete or continuous
time approximations. However, using a continuous time continuous
approximation of $X^n_t$
 introduced in the sequel enables us to simplify
the proof of the weak convergence and avoid some technical
assumptions used in \cite{HZa}, e.g. (2.3) ibid.

 The paths of the  process
$X^n=(X^n_t)_{t\in [0,T]}$ are right continuous piece-wise constant
functions with left limits belonging to the Skorokhod space
$\mathbb{D}=\mathbb{D}_{[0,T]}$. Consider a metric space
$(\mathbb{D}, d_0)$ endowed with the Skorokhod metric $d_0$: if
$\pmb{x}$, $\pmb{y}\in \mathbb{D}$, then
\begin{equation*}
d_0(\pmb{x},\pmb{y})=\inf_{\varphi\in\Phi}\Big\{\sup_{t\in[0,T]}|\pmb{x}_t-\pmb{y}_{\varphi(t)}|
+ \sup_{0\le s<t\le
1}\Big|\log\frac{\varphi(t)-\varphi(s)}{t-s}\Big|\Big\},
\end{equation*}
where $\Phi$ is a set of strictly increasing continuous functions
$\varphi= (\varphi(t))_{0\le t\le T}$ with $\varphi(0)=0$,
$\varphi(T)=1$.

Denote by  $\Q^n$  the probability measure on the  space
$(\mathbb{D},\mathscr{D})$, where $\mathscr{D}$ is the Borel
$\sigma$-algebra, of the distribution of $X^n=(X^n_t)_{t\in[0,T]}$,
and by $\Q$
 the distribution of $
X=(X_t)_{t\in[0,T]}.$ Since $X$ is a continuous process,
$\Q(\mathbb{C})=1$,   where $\mathbb{C}$ denotes the subspace of
continuous functions.

Evaluations of functionals  by simulations is justified by the
convergence of measures in the Skorokhod space
 $ \Q^n\xrightarrow[n\to \infty]{\rm d_0}\Q$, that is,
\begin{equation*}
\lim_{n\to\infty}\int_\mathbb{D}f(\pmb{x})d\Q^n=\int_\mathbb{C}f(\pmb{x})d\Q
\end{equation*}
for any bounded and continuous in the metric $d_0$ function
$f(\pmb{x})$.  The aim of the paper is precisely to show this
convergence. Note that such a  function $f(\pmb{x})$ need not be
continuous in the uniform metric $
\varrho(\pmb{x},\pmb{y})=\sup\limits_{t\in[0,T]}|\pmb{x}_t-\pmb{y}_t|.
$

We remark on the use of the function $x^+$ next. Since the process
$X=(X_t)_{t\in[0,T]}$ is nonnegative, the notation $X^+_t$ in
\eqref{eq:1.0} is used formally. However, it must be used for the
approximating process $X^{n+}_t$ in \eqref{eq:2.1}, because $X^n_t$
can (and does) become negative. Since $X^n_t$ has piece-wise
constant paths  the stopping time
$$
\tau^n=\inf\{t\le T:X^n_t\le 0\}
$$
much more likely corresponds  to the negative value rather than
the zero value of   $X^n_{\tau_n}$. The plus notation $X^{n+}_t$
in \eqref{eq:2.1} enables us to exclude negative values of $X^n_t$
everywhere except of $X^n_{\tau_n}$. Thus, in simulations of
$f(X)$ we must use
 $f(X^{n+})$. Next,
the function $g(\pmb{x}):=f(\pmb{x}^+)$ inherits the properties of
$f(\pmb{x})$ and is bounded and continuous in the metric $d_0$,
moreover, when applied to simulations it converges to the desired
limit
$$
\lim_{n\to\infty}\E f(X^{n+})\equiv\lim_{n\to\infty}\E g(X^n)=\E
g(X)=\E f(X^+)\equiv \E f(X).
$$
We introduce now the continuous approximation $\widetilde{X}^n_t$
used in the proof of weak convergence
$$
\widetilde{X}^n_t=X_0+ \sum_{k=1}^n
\int_{t^n_{k-1}}^{t\wedge\tau^n_k}\mu (\widetilde{X}^n_{t^n_{k-1}})^+ds+
\sum_{k=1}^n \int_{t^n_{k-1}}^{t\wedge t^n_k}\sigma
(\widetilde{X}^{n+}_{t^n_{k-1}})^pd\widetilde{W}_s,
$$
where $\widetilde{W}_t$ is a Brownian motion such that
$\widetilde{W}_{t^n_k}
-\widetilde{W}_{t^n_{k-1}}\equiv\sqrt{\frac{T}{n}}\xi_k$.

Denote $\widetilde{\Q}^n$ the distribution of
$(\widetilde{X}^n_t)_{t\in[0,T]}$. The main result  is that  $
\Q^n\xrightarrow[n\to \infty]{\rm d_0}\Q$.

\begin{theorem}\label{theo-1.1a}

For any $T>0$, the Euler-Maruyama approximation  for the model
\eqref{eq:2.1} converges weakly  in the Skorokhod metric $d_0$ to
the limit process $(X_t)_{t\in [0,T]}$ defined in \eqref{eq:1.0}.
\end{theorem}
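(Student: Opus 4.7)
The plan is to decompose the argument into three steps routed through the continuous approximation $\widetilde{X}^n$: (i) establish tightness of the laws $\widetilde{\Q}^n$ in $(\mathbb{C},\varrho)$; (ii) identify any subsequential weak limit as a weak solution of \eqref{eq:1.0} and invoke weak uniqueness; (iii) show $\sup_{t\le T}|X^n_t-\widetilde{X}^n_t|\to 0$ in probability, so that $\Q^n$ and $\widetilde{\Q}^n$ share the same weak limit. The point of working with $\widetilde{X}^n$ rather than the piecewise-constant $X^n$ is that the former is a continuous semimartingale whose characteristics are smooth functions of its own path, so the standard martingale-problem machinery applies without the extra discrete-time hypotheses imposed in \cite{HZa}.

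For tightness I would first prove the uniform second-moment bound $\sup_n \E \sup_{t\le T}(\widetilde{X}^n_t)^2<\infty$ using the elementary inequality $(x^+)^{2p}\le 1+x^2$ (valid for $p<1$), the Burkholder-Davis-Gundy inequality applied to the stochastic integral in $\widetilde{X}^n$, and Gronwall's lemma. The same ingredients yield an increment estimate $\E|\widetilde{X}^n_t-\widetilde{X}^n_s|^2\le C(t-s)$, hence tightness by Aldous' or Kolmogorov's criterion. Along a convergent subsequence, using a joint Skorokhod representation of $(\widetilde{X}^{n_j},\widetilde{W})$ we may assume $\widetilde{X}^{n_j}\to \widetilde{X}$ uniformly a.s.; writing $\eta_n(s)=t^n_{k-1}$ on $[t^n_{k-1},t^n_k)$, the moment bounds give $\sup_s|\widetilde{X}^n_s-\widetilde{X}^n_{\eta_n(s)}|\to 0$ in probability, and the continuity of $x\mapsto(x^+)^p$ combined with dominated convergence forces $\int_0^t\mu(\widetilde{X}^n_{\eta_n(s)})^+ds \to \int_0^t \mu\widetilde{X}^+_s\,ds$ and the quadratic variation $\sigma^2\int_0^t(\widetilde{X}^{n+}_{\eta_n(s)})^{2p}ds \to \sigma^2\int_0^t(\widetilde{X}^+_s)^{2p}ds$. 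A Kurtz--Protter or characteristics-based semimartingale convergence argument then identifies $\widetilde{X}$ as a weak solution of \eqref{eq:1.0}.

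Weak uniqueness for \eqref{eq:1.0} with $p\in[1/2,1)$ and $\sigma>0$ follows from the Yamada-Watanabe theorem since the modulus $\rho(u)=u^p$ satisfies $\int_{0+}\rho^{-2}(u)\,du=+\infty$; this gives $\widetilde{\Q}^n\Rightarrow\Q$. For the transfer to $\Q^n$, note that $X^n$ and $\widetilde{X}^n$ agree at the grid points, so
\[
\sup_{t\le T}|X^n_t-\widetilde{X}^n_t|\le \max_{1\le k\le n}\sup_{s\in[t^n_{k-1},t^n_k]}|\widetilde{X}^n_s-\widetilde{X}^n_{t^n_{k-1}}|,
\]
which vanishes in probability by the increment estimate, and since $d_0(\pmb{x},\pmb{y})\le\varrho(\pmb{x},\pmb{y})$, a Slutsky-type argument yields $\Q^n\Rightarrow\Q$. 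I expect the main obstacle to be the identification step: the non-Lipschitz coefficient $(x^+)^p$ forbids a pathwise strong-convergence argument for the stochastic integrals, and one must verify that the limit $\widetilde{X}$ is supported on nonnegative paths so that the $(\cdot)^+$ on the right side of the SDE drops out harmlessly in the limit. This is what justifies the authors' careful bookkeeping of $X^{n+}$ and the stopping at $\tau^n$ in the definition of $\widetilde{X}^n$, since $X^n$ itself can (and typically does) dip below zero.
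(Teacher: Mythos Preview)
Your three-step skeleton is exactly the paper's: route through the continuous interpolation $\widetilde{X}^n$, prove $\widetilde{\Q}^n\Rightarrow\Q$, then transfer via $\varrho(X^n,\widetilde{X}^n)\to 0$ in probability. Steps (i) and (iii) in your outline match the paper's Lemmas~3.2 and~3.3 almost verbatim, including the grid-agreement observation and the Slutsky/Billingsley conclusion.

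Where you diverge is in Step~2. The paper does not run tightness plus Skorokhod representation plus identification by hand; it invokes a single black-box result (Theorem~1, Ch.~8, \S 3 of \cite{LSMar}) on diffusion approximation of semimartingales, which reduces everything to three checks: weak uniqueness of \eqref{eq:1.0}, and convergence in probability of $\sup_t|B_t(\widetilde{X}^n)-B^n_t(\widetilde{X}^n)|$ and $\sup_t|\langle M\rangle_t(\widetilde{X}^n)-\langle M^n\rangle_t(\widetilde{X}^n)|$ to zero. The drift check is immediate from $|x^+-y^+|\le|x-y|$ and Lemma~3.2; the quadratic-variation check is where the paper spends its effort, proving the elementary but non-obvious inequality $|(x^+)^{2p}-(y^+)^{2p}|\le(2+|x|+|y|)|x-y|^p$ (Lemma~2.3) and then combining it with the uniform moment bound of Lemma~3.1 via the product lemma~2.1. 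Your route through tightness and a.s.\ convergence of integrands under Skorokhod representation is also correct and arguably more self-contained, but it is more work: you need the increment/moment estimates anyway, and the identification of the stochastic integral in the limit still requires essentially the same control on $(\widetilde{X}^{n+})^{2p}$ that Lemma~2.3 provides.

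Two minor corrections. First, the concern you flag as the ``main obstacle''---nonnegativity of the limit so that the $(\cdot)^+$ drops out---is not where the difficulty lies; the Liptser--Shiryaev theorem (and your martingale-problem argument) only needs weak uniqueness of the SDE with coefficients $\mu x$ and $\sigma(x^+)^p$, and Yamada--Watanabe delivers that without first proving the limit is nonnegative. Second, there is no ``stopping at $\tau^n$'' in the definition of $\widetilde{X}^n$: the $\tau^n_k$ in the displayed formula is a typo for $t^n_k$, and $\widetilde{X}^n$ is simply the continuous It\^o interpolation of the Euler scheme.
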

The proof is done in three steps: first we show that the distance
in the uniform metric between the Euler-Maruyama   and the
continuous semimartingale approximations converges to zero in
probability, secondly we show that the continuous approximation
converges  in the Skorokhod metric to the solution, and finally we
deduce convergence of the Euler-Maruyama approximation. Thus the
three steps in the proof are

\begin{equation}\label{eq:three}
\begin{aligned}
& \text{\bf step 1.}  \quad
\varrho(X^n,\widetilde{X}^n)\xrightarrow[n\to\infty]{\rm prob.}0
\\
& \text{\bf step 2.}\quad  \widetilde{\Q}^n\xrightarrow[n\to
\infty]{\rm d_0}\Q
\\
& \text{\bf step 3.}\quad \left.
  \begin{array}{ll}
    \varrho(X^n,\widetilde{X}^n)\xrightarrow[n\to\infty]{\rm
prob.}0 &  \\
 \widetilde{\Q}^n\xrightarrow[n\to \infty]{\rm d_0}\Q&
  \end{array}
\right\}\Rightarrow\Q^n\xrightarrow[n\to \infty]{\rm d_0}\Q.
\end{aligned}
\end{equation}
The proof of ``step 1'' uses standard stochastic calculus
calculations for semimartingales. The proof of ``step 2'' follows
from the  results on diffusion approximation for semimartingales
(see \cite[Ch.8]{LSMar}). The proof of ``step 3'' is done as a
standard application of  Billingsley \cite[Theorem 4.4, Ch.1, \S
4]{Bil}.

The paper is organized as follows.   In Section 2 we give
preliminary results used in the proof, with some being of
independent interest. The proof of the Theorem is given in Section
3. Section 4 gives simulations for the ruin probability $
\mathsf{P}(\tau< T). $ To be self contained, the existence and
uniqueness of solutions of
  the stochastic differential equation for the CEV model  \eqref{eq:1.0} is ahown in
Section \ref{sec-4}.

\section{\bf Preliminaries}
\label{sec-2}

We  shall use  the Doob maximal inequality for martingales (see e.g.
\cite[Ch.1, \S 9]{LSMar} and   \cite[p.201]{Klebaner}) in the
following context. Let $(\alpha_k)_{0\le k\le n-1}$ and
$(\xi_k)_{1\le k\le n}$ be random variables such that

- $\E\alpha^2_k<\infty$, $0\le k\le n-1$;

- $(\xi_k)_{1\le k\le n}$ is i.i.d. with $\E\xi_1=0$, $\E\xi^2_1=1$;

- $\xi_k$ and $\{\alpha_0,\ldots,\alpha_{k-1}\}$ are independent for
any $1\le k\le n$.

\smallskip
\noindent Set $M_t=\sum\limits_{j=1}^{[nt]}\alpha_{j-1}\xi_j$, where
$t\in [0,1]$ and $[nt]=k$ if $t\in(\frac{k-1}{n},\frac{k}{n}]$. The
process $M_t$ is a square integrable martingale for a suitable
filtration. So, the Doob maximal inequality $
\E\big(\big|\sup_{t\in[0,1]}M_t\big|^2\big)\le 4 \E M^2_1 $ is
equivalent to

\begin{equation}\label{eq:D1}
\E\max_{1\le k\le n}\Big|\sum_{j=1}^k\alpha_{j-1}\xi_j\Big|^2\le
4\sum_{j=1}^n \E\alpha^2_{j-1}.
\end{equation}

Next we give the   result useful in the analysis of products of
random variables.

\begin{lemma}\label{convProd}
Let $X_n$, $Y_n$ be sequences of random variables such that $X_n$
converges to zero in probability and expectations of $Y_n$ are
uniformly bounded, $\sup_n\E |Y_n|={\bf r}<\infty$. Then $X_nY_n$
converges to zero in probability.
\end{lemma}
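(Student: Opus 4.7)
The plan is to prove this via a standard truncation argument combined with Markov's inequality, since the only tools at hand are the boundedness in $L^1$ of $Y_n$ and the convergence in probability of $X_n$.

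Fix arbitrary $\varepsilon>0$ and $\delta>0$. The first step is to control the tails of $Y_n$: by Markov's inequality, for any threshold $K>0$,
\begin{equation*}
\mathsf{P}(|Y_n|>K)\le \frac{\E|Y_n|}{K}\le \frac{\mathbf{r}}{K},
\end{equation*}
uniformly in $n$. So I would choose $K=K(\delta)$ large enough that $\mathbf{r}/K<\delta/2$.

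The second step is to split the event $\{|X_nY_n|>\varepsilon\}$ according to whether $|Y_n|$ exceeds $K$ or not:
\begin{equation*}
\mathsf{P}(|X_nY_n|>\varepsilon)\le \mathsf{P}(|Y_n|>K)+\mathsf{P}(|X_n|\,|Y_n|>\varepsilon,\ |Y_n|\le K)\le \frac{\mathbf{r}}{K}+\mathsf{P}\Big(|X_n|>\frac{\varepsilon}{K}\Big).
\end{equation*}
Since $X_n\to 0$ in probability and $K$ is now a fixed constant (depending only on $\delta$ and $\mathbf{r}$), the second term tends to $0$ as $n\to\infty$, so for all sufficiently large $n$ it is bounded by $\delta/2$. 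Combining the two terms yields $\mathsf{P}(|X_nY_n|>\varepsilon)<\delta$ for large $n$, which is precisely convergence of $X_nY_n$ to zero in probability.

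There is no real obstacle here; the only subtlety is the order of quantifiers, namely that $K$ must be chosen before invoking the convergence in probability of $X_n$, so that $\varepsilon/K$ is a fixed positive number. The assumption $\sup_n\E|Y_n|<\infty$ (tightness of the laws of $Y_n$ would in fact suffice) is exactly what makes the truncation step uniform in $n$.
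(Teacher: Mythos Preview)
Your argument is correct and is essentially identical to the paper's own proof: both split the event $\{|X_nY_n|>\varepsilon\}$ according to whether $|Y_n|$ exceeds a threshold, bound the large-$|Y_n|$ part via Markov's inequality using $\sup_n\E|Y_n|\le\mathbf{r}$, and handle the small-$|Y_n|$ part via $\mathsf{P}(|X_n|>\varepsilon/K)\to 0$. The only cosmetic difference is that the paper takes $\varlimsup_{n\to\infty}$ and then lets the threshold $C\to\infty$, whereas you fix $\delta$ and choose $K$ in advance; these are equivalent formulations of the same truncation argument.
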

\begin{proof}
\begin{gather*}
\mathsf{P}\big(|X_nY_n|>\varepsilon\big) = \mathsf{P}\big(|Y_n|\le
C, |X_nY_n|>\varepsilon\big) +\mathsf{P}\big(|Y_n|> C,
|X_nY_n|>\varepsilon\big)
\\
\le \mathsf{P}\big(C|X_n|>\varepsilon\big) +\mathsf{P}\big(|Y_n|>
C\big).
\end{gather*}
By the Chebyshev inequality $ \mathsf{P}\big(|Y_n|> C\big)\le
\frac{\mathbf{r}}{C}. $ Hence
$$
\varlimsup_{n\to\infty}\mathsf{P}\big(|X_nY_n|>\varepsilon\big) \le
\frac{\mathbf{r}}{C}\xrightarrow[C\to\infty]{}0. $$

\end{proof}

Next we need the following result of convergence to zero the
expectation of double-maximum  the Brownian motion increments over
partitions.

\begin{lemma}\label{maxsup}
Let $W_t$ be a Brownian motion and $\{t_i\}$, $i=1,\dots, n$ an
equidistant partition of $[0,1]$. Let $W^*_i=\sup_{t\in [t_{i-1},
t_i]}|W_t-W_{t_{i-1}}|$, and $M_n=\max_{1\le i\le n}W^*_i$. Then
 $M_n$ converges to zero  in $\mathbb{L}^k$ for any $k\ge 1$.
\end{lemma}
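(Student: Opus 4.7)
The plan is to exploit the independence and Brownian scaling to reduce $M_n$ to the maximum of $n$ i.i.d. rescaled copies of a single supremum, then control the tail by a Gaussian estimate and integrate.

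First I would observe that the increments of $W$ over the disjoint intervals $[t_{i-1},t_i]$ are independent, so the random variables $W^*_1,\dots,W^*_n$ are independent. By stationarity of increments and Brownian scaling, each $W^*_i$ has the same law as $\sup_{s\in[0,1/n]}|W_s|$, and by the scaling $W_s\stackrel{d}{=}n^{-1/2}W_{ns}$ this equals in distribution $n^{-1/2}S$, where $S=\sup_{s\in[0,1]}|W_s|$. From the reflection principle (applied separately to $W$ and $-W$) one has the tail bound $\mathsf{P}(S>x)\le 4e^{-x^2/2}$ for $x>0$, so a union bound gives
\begin{equation*}
\mathsf{P}(M_n>x)\;\le\;n\,\mathsf{P}\big(S>x\sqrt{n}\big)\;\le\;4n\,e^{-nx^2/2}.
\end{equation*}

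Next, using the layer-cake formula $\E M_n^k=k\int_0^\infty x^{k-1}\mathsf{P}(M_n>x)\,dx$, I would split the integral at a threshold $x_n=c\sqrt{(\log n)/n}$ with $c$ chosen large (e.g.\ $c^2>k$). On $[0,x_n]$ the integrand is bounded by $kx^{k-1}$, contributing at most $x_n^k=C\bigl((\log n)/n\bigr)^{k/2}\to 0$. On $[x_n,\infty)$ I would use the Gaussian tail and the substitution $u=x\sqrt{n}$ to rewrite the contribution as
\begin{equation*}
4k\,n^{1-k/2}\int_{c\sqrt{\log n}}^{\infty}u^{k-1}e^{-u^2/2}\,du,
\end{equation*}
which, by a standard tail estimate for the Gaussian, is bounded by $C\,n^{1-k/2}(\log n)^{(k-2)/2}\,n^{-c^2/2}$, and hence tends to zero for $c$ large. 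Combining the two pieces yields $\E M_n^k\to 0$ for every fixed $k\ge 1$.

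I do not expect a serious obstacle: the proof is driven entirely by the Gaussian tail and the union bound, and the only care needed is to pick the splitting threshold $x_n$ so that both the trivial contribution on $[0,x_n]$ and the exponentially small contribution on $[x_n,\infty)$ vanish. The result could also be phrased as showing uniform integrability of $\{M_n^k\}$ together with $M_n\xrightarrow{\mathsf{P}}0$ (the latter being immediate from the bound $4n\,e^{-n\varepsilon^2/2}\to 0$), but the direct moment computation sketched above is shorter and more self-contained.
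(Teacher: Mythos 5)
Your proof is correct. The opening reduction is the same as the paper's: independence of the $W^*_i$ over disjoint intervals, a union bound $\mathsf{P}(M_n>x)\le n\,\mathsf{P}(W^*_1>x)$, the splitting $\sup|W|\le \sup W+\sup(-W)$ together with the reflection principle, and Brownian scaling. Where you diverge is in how the moment integral is evaluated. The paper integrates the tail bound over all of $[0,\infty)$ without any truncation, which collapses to $\E M_n^k\le C_k\,n^{1-k/2}$; this only tends to zero for $k>2$, so the paper needs a final interpolation step (convergence in $\mathbb{L}^p$ implies convergence in $\mathbb{L}^k$ for $1\le k\le p$) to cover small $k$. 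Your log-threshold split at $x_n=c\sqrt{(\log n)/n}$ keeps the Gaussian decay in play on the tail region and bounds the probability trivially by $1$ near the origin, yielding $\E M_n^k=O\bigl(((\log n)/n)^{k/2}\bigr)$ for every $k\ge 1$ in one pass — a sharper estimate (indeed of the correct order, since $M_n\asymp\sqrt{2\log n/n}$) that dispenses with the interpolation step. The only point worth double-checking in your write-up is the condition on $c$: what is actually needed for the tail piece is $c^2>2-k$, and your choice $c^2>k$ implies this for all $k\ge 1$, so the argument closes.
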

\begin{proof}
We calculate $k$-th moment of $M_n$, $k\ge 1$. Since $M_n\ge 0$
\begin{gather*}
\E  M^k_n=\int_0^\infty \mathsf{P}\big(M^k_n\ge x\big)dx
=\int_0^\infty \mathsf{P}\big(M_n\ge x^{1/k}\big)dx
\\
\le n\int_0^\infty \mathsf{P}\Big( W^*_1\ge x^{1/k}\Big)dx.
\end{gather*}
The inequality is obtained since $(W^*_i)_{1\le i\le n}$ are i.i.d.
random variables, therefore for any $x\ge 0$
$$\begin{small}
 \mathsf{P}\big(M_n\ge x^{1/k}\big)=\mathsf{P}\Big(\max_{1\le i\le
n}W^*_i\ge x^{1/k}\Big)=\mathsf{P}\Big(\bigcup_{i=1}^n \{W^*_i\ge
x^{1/k}\}\Big)\le n\mathsf{P}\Big(W^*_1\ge x^{1/k}\Big) .
\end{small}
$$

Next since $
\sup_{t\in[0,1/n]}|W_t|\le\sup_{t\in[0,1/n]}W_t+\sup_{t\in[0,1/n]}(-W_t),
$ it follows

\begin{gather*}
\mathsf{P}\Big(W^*_1\ge x^{1/k}\Big)\le
\mathsf{P}\bigg(\sup_{t\in[0,1/n]}W_t+\sup_{t\in[0,1/n]}(-W_t)\ge
x^{1/k}\bigg)
\\
\le \mathsf{P}\bigg(\sup_{t\in[0,1/n]}W_t\ge
\frac{1}{2}x^{1/k}\bigg)+\mathsf{P}\bigg(\sup_{t\in[0,1/n]}(-W_t)
\ge \frac{1}{2}x^{1/k}\bigg) =2\mathsf{P}\bigg(W_{1/n}\ge
\frac{1}{2}x^{1/k}\bigg),
\end{gather*}
 where we have used  the well-known law of the maximum of Brownian
 motion.

Thus
$$
\E M^k_n\le 4n\int_0^\infty \mathsf{P}\bigg(W_{1/n}\ge
\frac{1}{2}x^{1/k}\bigg)dx\le 4n\int_0^\infty
\mathsf{P}\bigg(|W_{1/n}|\ge \frac{1}{2}x^{1/k}\bigg)dx
$$
$$
  = 4n2^k\E|W_{1/n}|^k=C_kn^{1-k/2},
$$
where $C_k$ depends only on $k$, ($C_k=2^{k+2}\E |\xi |^k$ with
$\xi\sim N(0,1)$). Hence $M_n$ converges to zero in $\mathbb{L}^k$
for any $k>2$. But since convergence in $\mathbb{L}^p$ for a $p>1$
implies convergence in  $\mathbb{L}^k$ for any $k\in [1,p]$, the
statement is proved.
\end{proof}

The next elementary inequality is new and is  instrumental in the
proof.

\begin{lemma}\label{lem-pdiff}

For any $x,y\in\mathbb{R}$, and $p\in[\frac{1}{2},1)$,

\begin{gather*}
|(x^+)^{2p}-(y^+)^{2p}| \le
  \begin{cases}
|x-y|, & p=\frac{1}{2} \\
 (2+|x|+|y|)|x-y|^p, & p\in\big(\frac{1}{2},1\big).
\end{cases}
\end{gather*}
\end{lemma}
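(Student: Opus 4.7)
The inequality splits naturally according to the value of $p$. For $p = \tfrac{1}{2}$ one has $2p = 1$, so $(x^+)^{2p} = x^+$ and the claim reduces to the $1$-Lipschitz property of the positive-part map, $|x^+ - y^+| \le |x - y|$, which is immediate from $t \mapsto t^+ = \tfrac12(t+|t|)$.

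For $p \in (\tfrac12,1)$, my plan rests on the algebraic factorization
\begin{equation*}
a^{2p} - b^{2p} = (a^p - b^p)(a^p + b^p),\qquad a,b\ge 0,
\end{equation*}
applied to $a = x^+$ and $b = y^+$. This splits the desired bound into two factors, one producing the $|x-y|^p$ and the other producing the linear factor $(2+|x|+|y|)$. To handle the two factors I would use two elementary facts, valid for $s,t \ge 0$ and $p\in(0,1)$:
\begin{equation*}
\text{(i)}\ |s^p - t^p| \le |s-t|^p,\qquad \text{(ii)}\ s^p \le 1 + s.
\end{equation*}
Inequality (i) follows from the sub-additivity $u^p + (1-u)^p \ge 1$ on $[0,1]$ (each summand dominates the corresponding linear term because $p<1$); inequality (ii) is trivial by separating $s\le 1$ and $s>1$.

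Combining (i) with the $1$-Lipschitz property of the positive part gives $|a^p - b^p| \le |x^+ - y^+|^p \le |x-y|^p$, while (ii) applied to $a$ and $b$ gives $a^p + b^p \le 2 + a + b \le 2 + |x| + |y|$. Multiplying the two estimates yields the claim. There is no real obstacle here; the only non-routine step is spotting the factorization, which is precisely what converts a H\"older-type bound with exponent $2p>1$ (for which no clean constant-times-$|x-y|^p$ bound exists) into two independent bounds with exponent $p<1$, each handled by the elementary inequalities above.
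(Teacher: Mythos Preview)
Your proof is correct and follows essentially the same route as the paper: the case $p=\tfrac12$ reduces to $|x^+-y^+|\le|x-y|$, and for $p\in(\tfrac12,1)$ both you and the authors use the factorization $(x^+)^{2p}-(y^+)^{2p}=\big((x^+)^p-(y^+)^p\big)\big((x^+)^p+(y^+)^p\big)$, bounding the second factor by $2+|x|+|y|$ via $s^p\le 1+s$ and the first by $|x-y|^p$ via the H\"older-type inequality $|s^p-t^p|\le|s-t|^p$. The only cosmetic differences are that the paper proves $|x^+-y^+|\le|x-y|$ by case analysis on signs rather than via $t^+=\tfrac12(t+|t|)$, and establishes $|s^p-t^p|\le|s-t|^p$ by a direct computation with $|1-(y/x)^p|^{1/p}$ rather than through sub-additivity.
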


\begin{proof}
For  $p=\frac{1}{2}$ we have

\begin{gather}
|x^+-y^+|=|x-y|I_{\{x>0,y>0\}}+|x|I_{\{x>0,y\le 0\}} +|y|I_{\{x\le
0,y> 0\}}
\nonumber\\
\le |x-y|I_{\{x>0,y>0\}}+|x-y|I_{\{x>0,y\le 0\}} +|y-x|I_{\{x\le
0,y> 0\}}\le |x-y|.
\label{eq:+||}
\end{gather}

\medskip
For $p\in\big(\frac{1}{2}, 1)$  we have
\begin{align*}
|(x^+)^{2p}-(y^+)^{2p}|&=|(x^+)^p-(y^+)^p||(x^+)^p+(y^+)^p|
\\
&\le (2+|x|+|y|)|(x^+)^p-(y^+)^p|.
\end{align*}
Next we now show that
\begin{equation*}
|x^p-y^p|\le|x-y|^p.
\end{equation*}

For $x=y=0$ it is obvious.

Consider $x>0$ and $y<0$. Then, taking into account the proved
inequality $|x^+-y^+|\le |x-y|$ (the statement of this lemma for
$p=\frac{1}{2}$) and the fact that $y^+=0$, we obtain
$$
|(x^+)^p-(y^+)^p|=|x^+|^p=|x^+-y^+|^p\le |x-y|^p.
$$
Clearly, the inequality remains true for $x<0$ and $y>0$.

If both $x$ and $y$ are positive and $x>y$, then
$$
|x^p-y^p|^{1/p}=x\Big|1-\Big(\frac{y}{x}\Big)^p\Big|^{1/p}\le
x\Big|1-\Big(\frac{y}{x}\Big)\Big|^{1/p} \le
x\Big|1-\Big(\frac{y}{x}\Big)\Big|=|x-y|
$$
and, in turn, $ |x^p-y^p|\le|x-y|^p. $ It is easy to see the
inequality remains true for $y>x$.
\end{proof}

\section{\bf Proof of Theorem \ref{theo-1.1a}}
In the following result we show  that the maximum of discrete
approximations have uniformly bounded seconds moments.
 Henceforth  $\mathbf{r}$ denotes a generic positive constant independent
of $n$ with different values at different appearances.

\begin{lemma}\label{lem-000}
Let
 $X^n_{t^n_{k}}$ be the
Euler-Maruyama  approximation defined in \eqref{eq:2.1}.

Then
 $
\E\max\limits_{1\le k\le n}|X^n_{t^n_{k}}|^2\le \mathbf{r}. $
\end{lemma}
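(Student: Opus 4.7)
The plan is to decompose the Euler-Maruyama iterate into its drift and martingale parts and bound each separately using the Doob-type inequality \eqref{eq:D1}, then close the argument with a discrete Gronwall lemma. Write
$$
X^n_{t^n_k}=X_0+A_k+M_k,\qquad A_k:=\mu\frac{T}{n}\sum_{j=1}^k (X^n_{t^n_{j-1}})^+,\qquad M_k:=\sigma\sqrt{T/n}\sum_{j=1}^k (X^{n+}_{t^n_{j-1}})^p\xi_j,
$$
so that, by $(a+b+c)^2\le 3(a^2+b^2+c^2)$, it suffices to control $\E\max_{m\le k}A_m^2$ and $\E\max_{m\le k}M_m^2$.

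For the drift part, since $(X^n)^+\ge 0$ the sequence $|A_m|$ is non-decreasing in $m$, so $\max_{1\le m\le k}A_m^2=A_k^2$, and Cauchy-Schwarz together with $k\le n$ yields
$$
A_k^2\le \mu^2\frac{T^2}{n}\sum_{j=1}^k |X^n_{t^n_{j-1}}|^2.
$$
For the martingale part, I would apply \eqref{eq:D1} with $\alpha_{j-1}=\sigma(X^{n+}_{t^n_{j-1}})^p\sqrt{T/n}$ to get
$$
\E\max_{1\le m\le k}M_m^2\le 4\sigma^2\frac{T}{n}\sum_{j=1}^k \E (X^{n+}_{t^n_{j-1}})^{2p}.
$$
Because $2p\in[1,2)$, the elementary bound $x^{2p}\le 1+x^2$ for $x\ge 0$ linearises the right-hand side in $\E|X^n_{t^n_{j-1}}|^2$.

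Setting $\phi_k:=\E\max_{0\le j\le k}|X^n_{t^n_j}|^2$ and assembling the two estimates produces a recursion of the form
$$
\phi_k\le a+\frac{b}{n}\sum_{j=0}^{k-1}\phi_j,\qquad k=0,1,\dots,n,
$$
with $a=3X_0^2+12\sigma^2 T$ and $b=3\mu^2 T^2+12\sigma^2 T$ depending only on $X_0,\mu,\sigma,T,p$. A standard discrete Gronwall lemma, proved by induction using $e^{b/n}-1\ge b/n$, then gives $\phi_n\le a\,e^{b}=:\mathbf{r}$, uniformly in $n$, which is the desired conclusion.

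The only mild subtlety is the non-Lipschitz exponent $p$ in the diffusion coefficient, but because $2p<2$ the crude bound $(X^{n+})^{2p}\le 1+(X^n)^2$ preserves the linear-in-square structure required for the Gronwall step, so no additional truncation or localisation is needed beyond routine bookkeeping.
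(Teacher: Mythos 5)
Your proof is correct and uses essentially the same ingredients as the paper's: the drift/martingale decomposition of the iterates, the Doob maximal inequality \eqref{eq:D1}, Cauchy--Schwarz, the elementary bound $(x^+)^{2p}\le 1+x^2$, and a Gronwall-type iteration. The only difference is organizational --- the paper first establishes $\max_{1\le k\le n}\E|X^n_{t^n_k}|^2\le\mathbf{r}$ from a pointwise recursion and then applies Doob once to pass the maximum inside the expectation, whereas you run the discrete Gronwall recursion directly on the running maximum $\phi_k$ --- and both versions are valid.
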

\begin{proof}
  First we bound from above  $\max\limits_{1\le k\le
n}\E|X^n_{t^n_k}|^2$. By \eqref{eq:2.1}
\begin{equation}\label{eq:UUU}
\E|X^n_{t^n_k}|^2=\E|X^n_{t^n_{k-1}}|^2\Big(1+\frac{\mu T}{n}\Big)^2
+\sigma^2 \E(X^{n+}_{t^n_{k-1}})^{2p}\frac{T}{n}.
\end{equation}
We use repeatedly the following bound of
$(X^{n+}_{t^n_{k-1}})^{2p}$. Since $(x^+)^{2p}\le |x|^{2p}$ and
$2p<2$, we have $ (x^+)^{2p}\le 1+x^2. $ Hence
\begin{equation}\label{boundsigma}
\E(X^{n+}_{t^n_{k-1}})^{2p}\le 1+\E|X^n_{t^n_{k-1}}|^2.
\end{equation}
    Next, for sufficiently large $n$,
there exists  $\mathbf{r}$ such that
\begin{equation}\label{eq:rr}
 \big(1+\frac{\mu
T}{n}\big)^2\le 1+\frac{\mathbf{r}}{n}.
\end{equation}
Hence for sufficiently large $n$, we obtain from \eqref{eq:UUU} by
using  \eqref{eq:rr} the recurrent inequality
\begin{gather*}
\E|X^n_{t^n_k}|^2\le
\E|X^n_{t^n_{k-1}}|^2\Big(1+\frac{\mathbf{r}}{n}\Big)+\frac{\sigma^2T}{n}
 \Big(1+\E|X^n_{t^n_k}|^2\Big)
 \\
 =\E|X^n_{t^n_{k-1}}|^2\Big(1+\frac{\mathbf{r}+\sigma^2T}{n}\Big)+\frac{\sigma^2T}{n}
 :=\E|X^n_{t^n_{k-1}}|^2\Big(1+\frac{\mathbf{r}}{n}\Big)+\frac{\mathbf{r}}{n}.
\end{gather*}
Iterating it, for   $k\le n$ we obtain
\begin{gather*}
\E|X^n_{t^n_k}|^2\le X^2_0
\Big(1+\frac{\mathbf{r}}{n}\Big)^k+\frac{r}{n}\sum_{j=1}^k\Big(1+\frac{\mathbf{r}}{n}
\Big)^{k-j+1}
\\
\le
(X^2_0+\mathbf{r})\Big(1+\frac{\mathbf{r}}{n}\Big)^n=(X^2_0+\mathbf{r})O\big(e^{\mathbf{r}}\big).
\end{gather*}

Thus
\begin{equation}\label{max2nd}
\max\limits_{1\le k\le n}\E|X^n_{t^n_k}|^2\le \mathbf{r}.
\end{equation}
From the definition of the scheme \eqref{eq:2.1} we obtain by
iterations

\begin{equation*}
\max_{1\le k\le n}|X^n_{t^n_k}|\le
|X_0|+|\mu|\frac{T}{n}\sum_{j=1}^n |X^n_{t^n_{j-1}}|+
\sqrt{\frac{T}{n}}\max_{1\le k\le
n}\bigg|\sum_{j=1}^k\sigma(X^{n+}_{t^n_{j-1}})^p\xi_j\bigg|.
\end{equation*}
Next we use the Cauchy-Schwarz inequality $(\sum_{i=1}^la_i)^2\le l
\sum_{i=1}^la_i^2$ with $l=3$. Proceeding from above we have
\begin{multline}\label{itermax}
\E \max_{1\le k\le n}|X^n_{t^n_k}|^2
\\
\le 3 |X_0|^2+3\frac{\mu^2T^2}{n^2} \E\Big(\sum_{j=1}^n
|X^n_{t^n_{j-1}}| \Big)^2  +3\frac{T}{n}  \E\max_{1\le k\le
n}\Big|\sum_{j=1}^k\sigma(X^{n+}_{t^n_{j-1}})^p\xi_j\Big|^2.
\end{multline}
The Cauchy-Schwarz inequality  $\Big(\sum_{j=1}^n |X^n_{t^n_{j-1}}|
\Big)^2\le n  \sum_{j=1}^n (X^n_{t^n_{j-1}})
 ^2$  coupled with the bound on the
largest second moment shown above \eqref{max2nd} implies that the
second term in \eqref{itermax} is bounded by a constant $\mathbf{r}$
independent of $n$. The last term is bounded by using the Doob's
maximal inequality \eqref{eq:D1} and the bound in \eqref{boundsigma}
\begin{gather*}
\E\max_{1\le k\le n}\Big|\sum_{j=1}^k\sigma(X^{n+}_{t^n_{j-1}})^p
\xi_j\Big|^2 \le 4\sigma^2\sum_{j=1}^n\E(X^n_{t^n_{j-1}})^{2p}
\le4\sigma^2\sum_{j=1}^n[1+ \E(X^n_{t^n_{j-1}})^{2}].
\end{gather*}
Using \eqref{max2nd} we can now see that the last term in
\eqref{itermax} is bounded by $\mathbf{r}$.
 \end{proof}

We proceed now to prove the
 steps in \eqref{eq:three}.
\subsection{Step 1} The distance between the two approximations in the
$\sup$ norm converges to zero in probability and in $\mathbb{L}^2$.
\begin{lemma}\label{lem-2.3b}
$\E \varrho^2 (X^n,\widetilde{X}^n)\xrightarrow[n\to\infty]{}0. $
\end{lemma}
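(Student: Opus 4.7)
The plan is to exploit that at every partition point the two schemes coincide: since $\widetilde{W}_{t^n_k}-\widetilde{W}_{t^n_{k-1}}=\sqrt{T/n}\,\xi_k$, the values $\widetilde{X}^n_{t^n_k}$ satisfy exactly the recursion \eqref{eq:2.1}, so $\widetilde{X}^n_{t^n_k}=X^n_{t^n_k}$ for every $k$. The entire discrepancy therefore arises inside the sub-intervals $[t^n_{k-1},t^n_k]$, where a one-line computation gives
\[
\widetilde{X}^n_t - X^n_t \;=\; \mu(t-t^n_{k-1})(X^n_{t^n_{k-1}})^+ + \sigma\,(X^{n+}_{t^n_{k-1}})^p\bigl(\widetilde{W}_t-\widetilde{W}_{t^n_{k-1}}\bigr).
\]
Writing $W^*_k=\sup_{t\in[t^n_{k-1},t^n_k]}|\widetilde{W}_t-\widetilde{W}_{t^n_{k-1}}|$ and taking the $\sup$ over $t\in[0,T]$ yields
\[
\varrho(X^n,\widetilde{X}^n)\;\le\;\tfrac{|\mu|T}{n}\max_{1\le k\le n}|X^n_{t^n_{k-1}}| \;+\; \sigma\max_{1\le k\le n}\bigl[(X^{n+}_{t^n_{k-1}})^p W^*_k\bigr].
\]

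After squaring by $(a+b)^2\le 2a^2+2b^2$ and taking expectations, the drift piece is immediate: Lemma \ref{lem-000} gives $\E\max_k|X^n_{t^n_{k-1}}|^2\le \mathbf{r}$, so the drift contribution is $O(n^{-2})$. The real work is in the diffusion contribution $\E\bigl(\max_k (X^{n+}_{t^n_{k-1}})^p W^*_k\bigr)^2$. I would first majorise the square of the maximum by the product of maxima of squares, obtaining $\E\bigl[\max_k (X^{n+}_{t^n_{k-1}})^{2p}\cdot\max_k (W^*_k)^2\bigr]$, and then apply H\"older's inequality with the conjugate exponents $q=1/p$ and $q'=1/(1-p)$. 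The choice is deliberate: raising $\max_k(X^{n+}_{t^n_{k-1}})^{2p}$ to the power $1/p$ exactly collapses it to $\max_k(X^{n+}_{t^n_{k-1}})^2$, so that
\[
\E\bigl[\max_k(X^{n+}_{t^n_{k-1}})^{2p}\cdot\max_k(W^*_k)^2\bigr]\;\le\;\bigl(\E\max_k(X^{n+}_{t^n_{k-1}})^2\bigr)^p\,\bigl(\E\max_k(W^*_k)^{2/(1-p)}\bigr)^{1-p}.
\]

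The first factor is uniformly bounded by Lemma \ref{lem-000}; the exponent $2/(1-p)$ is finite for every $p\in[\tfrac12,1)$, so Lemma \ref{maxsup} drives the second factor to $0$. Assembled with the $O(n^{-2})$ drift piece, this gives $\E\varrho^2(X^n,\widetilde{X}^n)\to 0$. The main obstacle — and what determines the shape of the argument — is the mismatch between the sublinear exponent $p$ on $X^n$ in the diffusion coefficient and the fact that Lemma \ref{lem-000} supplies only a second-moment bound: a naive Cauchy--Schwarz would leave behind $\E\max_k(X^n_{t^n_{k-1}})^{4p}$, which is not controlled. The H\"older pair $(1/p,1/(1-p))$ is the device that reconciles the $p$-power with the available second moment by shifting the extra integrability burden onto the Brownian-increment factor, where Lemma \ref{maxsup} absorbs it without cost.
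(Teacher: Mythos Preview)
Your proof is correct and follows essentially the same route as the paper's own argument: both exploit the coincidence $\widetilde{X}^n_{t^n_k}=X^n_{t^n_k}$ at grid points, bound $\varrho(X^n,\widetilde{X}^n)$ by a drift piece controlled via Lemma~\ref{lem-000} and a diffusion piece handled by H\"older's inequality so that the second-moment bound of Lemma~\ref{lem-000} lands on the $X^n$ factor while Lemma~\ref{maxsup} kills the Brownian-increment factor. Your H\"older pair $(1/p,\,1/(1-p))$ is in fact the clean choice; the paper records the pair as $(2/p,\,2/(2-p))$, but the mechanism and all the ingredients are identical.
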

\begin{proof}
From  the definitions of $X^n_t$ and $\widetilde{X}^n_t$ it follows
that at the points of the partitions both approximations coincide
and at intermediate points the following holds
\begin{align}\label{eq:==}
X^n_{t^n_k}&\equiv \widetilde{X}^n_{t^n_k}
\nonumber\\
\widetilde{X}^n_t-X^n_{t^n_{k-1}}&=\int_{t^n_{k-1}}^{t\wedge
t^n_k}\mu (X^n_{t^n_{k-1}})^+ds +\int_{t^n_{k-1}}^{t\wedge
t^n_k}\sigma(X^{n+}_{t^n_{k-1}})^pd\widetilde{W}_s, \quad
t\in[t^n_{k-1}, t^n_k].
\end{align}

By the formula \eqref{eq:==},
\begin{gather*}
\sup_{t\in[t^n_{k-1}, t^n_k]}|\widetilde{X}^n_t-X^n_{t^n_{k-1}}|\le
\frac{|\mu|T}{n}|X^n_{t^n_{k-1}}| +\sigma\sup_{t\in[t^n_{k-1},
t^n_k]}\big|\widetilde{W}_t-\widetilde{W}_{t^n_{k-1}}|
|X^n_{t^n_{k-1}}|^p.
\end{gather*}
Consequently,
\begin{gather*}\label{supdif}
\varrho
(X^n,\widetilde{X}^n)=\sup_{t\in[0,T]}|\widetilde{X}^n_t-X^n_t|=
\max\limits_{1\le k\le n}\sup_{t\in[t^n_{k-1},
t^n_k]}|\widetilde{X}^n_t-X^n_{t^n_{k-1}}|\nonumber
\\
\le \frac{|\mu|T}{n}\max_{1\le k\le n}|X^n_{t^n_{k-1}}| +\sigma
\max\limits_{1\le k\le n}\sup_{t\in[t^n_{k-1},
t^n_k]}\big|\widetilde{W}_t-\widetilde{W}_{t^n_{k-1}} | \max_{1\le
k\le n} |X^n_{t^n_{k-1}}|^p.
\end{gather*}
The first term converges to zero in $\mathbb{L}^2$
 using Lemma \ref{lem-000}   as  $
 \E   \max_{1\le k\le n}|X^n_{t^n_{k-1}}|\le {\bf r}.$
For the second term use H\"older's inequality with parameters
$\frac{2}{p}$ and $\frac{2}{2-p}$:
\begin{gather*}
\E \bigg(\max\limits_{1\le k\le n}\sup_{t\in[t^n_{k-1},t^n_k]}
\big|\widetilde{W}_t-\widetilde{W}_{t^n_{k-1}} | \max_{1\le k\le n}
|X^n_{t^n_{k-1}}|^p\bigg)^2
\\
\le \Bigg(\E\bigg[\max\limits_{1\le k\le n}\sup_{t\in[t^n_{k-1},
t^n_k]}\big|\widetilde{W}_t-\widetilde{W}_{t^n_{k-1}}
|\bigg]^{\frac{2}{2-p}}\Bigg)^{2-p}\Bigg(\E \max_{1\le k\le n}
|X^n_{t^n_{k-1}}|^{2}\Bigg)^{p}.
\end{gather*}
By Lemma
 \ref{lem-000} $\sup_n\E\max_{1\le k\le n}
|X^n_{t^n_{k-1}}|^2\le \mathbf{r}$. Since $\frac{2}{2-p}>1$, by
Lemma \ref{maxsup}
$$
\lim_{n\to\infty}\E\Big(\max\limits_{1\le k\le
n}\sup_{t\in[t^n_{k-1},
t^n_k]}\big|\widetilde{W}_t-\widetilde{W}_{t^n_{k-1}}
|\Big)^{2/(2-p)}=0.
$$
\end{proof}

\subsection{Step 2} Weak convergence of continuous approximations.

\begin{lemma}\label{lem-4.2}
$ \widetilde{\Q}^n\xrightarrow[n\to \infty]{\rm d_0}\Q. $
\end{lemma}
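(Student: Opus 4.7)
The plan is to apply the general semimartingale diffusion-approximation result of Liptser--Shiryaev \cite[Ch.~8]{LSMar}: establish tightness of $\{\widetilde{\Q}^n\}$ in $\mathbb{D}$, verify that the predictable characteristics of $\widetilde{X}^n$ converge in probability to those of $X$, and close the argument with uniqueness in law of \eqref{eq:1.0} (proved in Section \ref{sec-4}). Writing $\eta_n(s):=t^n_{k-1}$ for $s\in[t^n_{k-1},t^n_k)$, the process $\widetilde{X}^n$ is a continuous semimartingale with drift $B^n_t=\int_0^t\mu(\widetilde{X}^n_{\eta_n(s)})^+\,ds$ and quadratic characteristic $C^n_t=\int_0^t\sigma^2(\widetilde{X}^{n+}_{\eta_n(s)})^{2p}\,ds$, while the target $X$ has $B_t=\int_0^t\mu X_s\,ds$ and $C_t=\int_0^t\sigma^2(X^+_s)^{2p}\,ds$.

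First I would lift the $\mathbb{L}^2$ maximal bound from Lemma \ref{lem-000} to $\widetilde{X}^n$: the recursion is identical with the It\^o isometry replacing the Doob bound \eqref{eq:D1}, and the growth estimate $(x^+)^{2p}\le 1+x^2$ used in the proof still applies. This yields $\E\sup_{t\le T}|\widetilde{X}^n_t|^2\le\mathbf{r}$ uniformly in $n$. Tightness of $\{\widetilde{\Q}^n\}$ on $\mathbb{D}$ then follows from Aldous' criterion: for any stopping times $\tau_n\le T$ and $\delta_n\downarrow 0$, both the drift contribution and the martingale variance over $[\tau_n,\tau_n+\delta_n]$ are $O(\delta_n)$ in expectation. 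Since the jumps of $\widetilde{X}^n$ are identically zero, every subsequential weak limit is supported on $\mathbb{C}$.

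The main obstacle is identifying such a subsequential limit as $\Q$. Along a convergent subsequence, Skorokhod's representation lets me assume $(\widetilde{X}^n,\widetilde{W})\to(X^*,W^*)$ uniformly on $[0,T]$ almost surely on a common probability space. Convergence of the drift characteristic to $\int_0^t\mu X^*_s\,ds$ is routine by dominated convergence. The delicate point is the quadratic characteristic, and here Lemma \ref{lem-pdiff} is decisive: it controls the integrand difference by $(2+|\widetilde{X}^n_{\eta_n(s)}|+|X^*_s|)\,|\widetilde{X}^n_{\eta_n(s)}-X^*_s|^p$ for $p\in(\tfrac{1}{2},1)$ (Lipschitzly for $p=\tfrac{1}{2}$), and the uniform second-moment bound combined with H\"older's inequality turn this into convergence $\int_0^t\sigma^2(\widetilde{X}^{n+}_{\eta_n(s)})^{2p}\,ds\to \int_0^t\sigma^2(X^{*+}_s)^{2p}\,ds$ in probability. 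This is precisely the step where the non-Lipschitz nature of the diffusion coefficient bites, and it is the one I expect to require the most care.

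With both characteristics matched, $M^*_t:=X^*_t-X_0-\int_0^t\mu X^*_s\,ds$ is a continuous local martingale with $\langle M^*\rangle_t=\int_0^t\sigma^2(X^{*+}_s)^{2p}\,ds$, so on a possibly enlarged space the martingale representation theorem produces a Brownian motion $\overline{W}$ with respect to which $X^*$ solves \eqref{eq:1.0}. Uniqueness in law of the CEV equation (Section \ref{sec-4}) then forces every subsequential limit to coincide with $\Q$, and a standard subsequence argument upgrades this to $\widetilde{\Q}^n\xrightarrow[n\to\infty]{d_0}\Q$.
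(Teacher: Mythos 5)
Your proposal is correct in outline, but it takes a genuinely different route from the paper. The paper applies the Liptser--Shiryaev diffusion-approximation theorem \cite[Ch.~8, \S 3, Theorem 1]{LSMar} as a black box whose hypotheses refer only to the \emph{pre-limit} process: besides uniqueness for \eqref{eq:1.0} (Proposition \ref{pro-.A}), one only has to show that $\sup_t|B_t(\widetilde{X}^n)-B^n_t(\widetilde{X}^n)|$ and $\sup_t|\langle M\rangle_t(\widetilde{X}^n)-\langle M^n\rangle_t(\widetilde{X}^n)|$ tend to zero in probability, i.e.\ that freezing the coefficients at the grid points is asymptotically harmless. Since $\widetilde{X}^n$ and $X^n$ agree on the grid, both quantities reduce via Lemma \ref{lem-pdiff} to powers of $\varrho(\widetilde{X}^n,X^n)$ multiplied by factors with bounded expectation (Lemmas \ref{lem-000}, \ref{lem-2.3b}, \ref{convProd}); no tightness argument, subsequence extraction, or Skorokhod representation is needed, because the cited theorem packages all of that. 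You instead unpack the theorem: Aldous tightness, identification of subsequential limits as solutions of the martingale problem, martingale representation, and uniqueness in law. This is longer but more self-contained, and your key estimates (the uniform second-moment bound, Lemma \ref{lem-pdiff} applied to the $2p$-th powers, and the product argument of Lemma \ref{convProd}) are the same ones the paper uses. Two points in your sketch deserve care. First, in the identification step you need $(M^n_t)^2-\langle M^n\rangle_t$ to pass to the limit as a martingale, which requires uniform integrability of $(M^n_t)^2$; the available bound $\E\sup_t|M^n_t|^2\le\mathbf{r}$ gives uniform integrability of $M^n_t$ but not of its square, so you must either establish a higher moment bound or, more simply, localize by stopping at the exit time from a large ball before passing to the limit. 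Second, the drift characteristic of $\widetilde{X}^n$ involves $(\cdot)^+$, so the limit a priori solves the equation with coefficients $\mu x^+$ and $\sigma(x^+)^p$; one should remark that its solutions started from $X_0>0$ are nonnegative and absorbed at zero, hence coincide in law with those of \eqref{eq:1.0}. Both are standard repairs, so I regard your argument as a valid alternative rather than a gap.
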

\begin{proof} The proof rests on a general result on the weak convergence of semimartingales
to a diffusion \cite{LSMar}, Theorem 1, Ch. 8, \S 3. This theorem
states that for weak convergence to a diffusion it is enough to
check convergence of the drifts and quadratic variations evaluated
at the pre-limit processes. The processes $X$ and
$\widetilde{X}^n$ are semimartingales with following
decompositions
\begin{align*}
X_t&=X_0+\underbrace{\int_0^t\mu
X_sds}_{:=B_t(X)}+\underbrace{\int_0^t\sigma (X^+_s)^pdW_s}
_{:=M_t(X)}
\\
\widetilde{X}^n_t&=X_0+ \underbrace{\sum_{k=1}^n
\int_{t^n_{k-1}}^{t\wedge t^n_k}\mu
\widetilde{X}^{n+}_{t^n_{k-1}}ds}_{:=B^n_t(\widetilde{X}^n)}+
\underbrace{\sum_{k=1}^n \int_{t^n_{k-1}}^{t\wedge t^n_k}\sigma
(\widetilde{X}^{n+}_{t^n_{k-1}})^pd\widetilde{W}_s}_
{:=M^n_t(\widetilde{X}^n)},
\end{align*}
where we have denoted above
\begin{itemize}
  \item[(B)] $B_t(X)$ and $B^n_t(\widetilde{X}^n)$ are drifts
  \item[(M)] $M_t(X)$ and $M^n_t(\widetilde{X}^n)$ are continuous martingales
  with predictable quadratic variations
\begin{equation*}
\begin{aligned}
\langle M\rangle_t(X)&=\int_0^t\sigma^2(X^+_s)^{2p}ds,
\\
\langle M^n\rangle_t(\widetilde{X}^n)&=\sum\limits_{k=1}^n
\int_{t^n_{k-1}}^{t\wedge t^n_k}\sigma^2(\widetilde{X}^{n+}_
{t^n_{k-1}})^{2p}ds.
\end{aligned}
\end{equation*}
\end{itemize}

\smallskip
\noindent  The above mentioned Theorem 1 of \cite{LSMar}, adapted
to the present setting,  states that the weak convergence takes
place if the following three conditions hold.

\begin{equation*}
\left.
  \begin{array}{ll}
{\bf (a)}\quad    \text{Equation (\ref{eq:1.0}) has a unique (at least weak) solution} & \\
{\bf (b)}\quad
\sup\limits_{t\in[0,T]}|B_t(\widetilde{X}^n)-B^n_t(\widetilde{X}^n)|\xrightarrow[n\to\infty]
{\rm prob.}0  & \\
{\bf (c)} \quad \sup\limits_{t\in[0,T]}|\langle
M\rangle(\widetilde{X}^n)-\langle M^n\rangle_t(\widetilde{X}^n)|
\xrightarrow[n\to\infty]{\rm prob.}0    , &
  \end{array}
\right\}\Rightarrow\widetilde{\Q}^n\xrightarrow[n\to\infty]{\rm
d_0}\Q.
\end{equation*}
We proceed to verify these conditions. The existence and uniqueness
of (\ref{eq:1.0})   is known, e.g. \cite{ShigaWat}, and is also
given for completeness   Proposition \ref{pro-.A}. Hence {\bf (a)}
holds. To show  {\bf (b)} write, taking into account
\eqref{eq:+||},
\begin{align*}
&\sup_{t\in[0,T]}\Big|B_t(\widetilde{X}^n)-B^n_t(\widetilde{X}^n)]\Big|
\le
|\mu|\sum_{k=1}^n\int_{t^n_{k-1}}^{t^n_k}|\widetilde{X}^n_s-\widetilde{X}^n_{t^n_{k-1}}|ds
\le T|\mu|\varrho(\widetilde{X}^n, X^n).
\end{align*}
 Hence  {\bf (b)} holds by applying Lemma \ref{lem-2.3b}.

To prove {\bf (c)}  write the bound
\begin{gather}
\sup_{t\in[0,T]}\Big|\langle M\rangle_t(\widetilde{X}^n)- \langle
M^n\rangle_t(\widetilde{X}^n)\Big|\le\sigma^2
\sum_{k=1}^n\int_{t^n_{k-1}}^{t^n_k}|(\widetilde{X}^{n+}_s)^{2p}-(\widetilde{X}^{n+}_{t^n_{k-1}})
^{2p}|ds\nonumber
\\
=\sigma^2
\sum_{k=1}^n\int_{t^n_{k-1}}^{t^n_k}|(\widetilde{X}^{n+}_s)^{2p}-(X^{n+}_{t^n_{k-1}})
^{2p}|ds,\label{qvbound}
\end{gather}
where we have used that the two approximations coincide on the grid.
By applying Lemma \ref{lem-pdiff} we have further bound on the
expression under the integral

\begin{multline*}
|\widetilde{X}^{n+}_s)^{2p}-(X^{n+}_{t^n_{k-1}})^{2p}|
\\
\le
  \begin{cases}
 |\widetilde{X}^n_s-X^n_{t^n_{k-1}}|, & p=\frac{1}{2}  \\
\Big(2+
\sup\limits_{t\in[0,T]}|\widetilde{X}^n_t|+\sup\limits_{t\in[0,T]}|X^n_t|
 \Big)\sup\limits_{s\in[t^n_{k-1},
t^n_k]}|\widetilde{X}^n_s-X^n_{t^n_{k-1}}|^p, &
p\in\big(\frac{1}{2}, 1\big)
  \end{cases}.
\end{multline*}

Hence for $p=\frac{1}{2}$ the bound in \eqref{qvbound} becomes
$$
\sigma^2
\sum_{k=1}^n\int_{t^n_{k-1}}^{t^n_k}|(\widetilde{X}^{n+}_s)^{2p}-(X^{n+}_{t^n_{k-1}})
^{2p}|ds\le\sigma^2T\varrho(\widetilde{X}^n,X^n)
$$
and the statement follows by Lemma \ref{lem-2.3b}.

For $p\in(\frac{1}{2},1)$ the bound in \eqref{qvbound} becomes
\begin{align*}
&\sigma^2
\sum_{k=1}^n\int_{t^n_{k-1}}^{t^n_k}|(\widetilde{X}^{n+}_s)^{2p}-(X^{n+}_{t^n_{k-1}})
^{2p}|ds=\sigma^2
\sum_{k=1}^n\int_{t^n_{k-1}}^{t^n_k}|(\widetilde{X}^{n+}_s)^{2p}-(X^{n+}_{t^n_{k-1}})
^{2p}|ds
\\
&\le\sigma^2 \Big(2+
\sup\limits_{t\in[0,T]}|\widetilde{X}^n_t|+\sup\limits_{t\in[0,T]}|X^n_t|
 \Big)\sum_{k=1}^n\int_{t^n_{k-1}}^{t^n_k}\sup\limits_{s\in[t^n_{k-1},
t^n_k]}|\widetilde{X}^n_s-X^n_s|^pds
\\
&\le T\sigma^2 \Big(2+
\sup\limits_{t\in[0,T]}|\widetilde{X}^n_t|+\sup\limits_{t\in[0,T]}|X^n_t|\Big)
 \varrho^p(\widetilde{X}^n,X^n).
\end{align*}

By Lemma \ref{lem-000}, Lemma \ref{lem-2.3b} we have the product of
two terms, one of which has uniformly bounded expectations and the
second converges in probability to zero. By Lemma \ref{convProd} the
product converges in probability to zero.

Thus the conditions of the Theorem 1 of \cite{LSMar} are verified
and   weak convergence is proved.
\end{proof}

\subsection{Step 3} Weak convergence of the Euler-Maruyama
approximations.

\begin{lemma}\label{lem-5.q}
For any bounded and continuous in the metric $d_0$ function
$f(\pmb{x})$,
$$
\varlimsup_{n\to\infty}|\E f(X^n)-\E f(X)|
\le\varlimsup_{n\to\infty}|\E f(\widetilde{X}^n)-\E f(X)|=0.
$$
\end{lemma}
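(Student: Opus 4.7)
The plan is to reduce the statement to a triangle-inequality bound
$$|\E f(X^n) - \E f(X)| \le |\E f(X^n) - \E f(\widetilde{X}^n)| + |\E f(\widetilde{X}^n) - \E f(X)|.$$
Taking $\varlimsup$ in $n$, the second term on the right is precisely the one appearing in the claimed inequality, and it already equals zero by Lemma~\ref{lem-4.2} (Step~2), since weak convergence $\widetilde{\Q}^n \to \Q$ in $d_0$ implies $\E f(\widetilde{X}^n) \to \E f(X)$ for every bounded $d_0$-continuous functional $f$. So the whole task is to show that the first term on the right also tends to zero.

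For this, I would first record the elementary inequality $d_0(\pmb{x},\pmb{y}) \le \varrho(\pmb{x},\pmb{y})$, obtained by taking $\varphi$ equal to the identity in the infimum defining $d_0$. Combined with Lemma~\ref{lem-2.3b} (Step~1), this gives
$$d_0(X^n,\widetilde{X}^n) \xrightarrow[n\to\infty]{\mathrm{prob.}} 0.$$
Together with the weak convergence $\widetilde{\Q}^n \to \Q$ from Step~2, Billingsley's Theorem~4.4 of \cite{Bil} (a Slutsky-type statement: if two sequences of random elements in a metric space are asymptotically equal in that metric and one converges weakly, so does the other, to the same limit) yields $\Q^n \to \Q$ in $d_0$. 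Applying this to the bounded $d_0$-continuous functional $f$ delivers $\E f(X^n) \to \E f(X)$, so in particular $\E f(X^n) - \E f(\widetilde{X}^n) \to 0$, which is what was needed.

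There is no genuine obstacle here, because both substantive ingredients are already in place: Step~1 controls the distance between the two approximations in the stronger uniform metric, and Step~2 provides weak convergence of the continuous approximation. The only point requiring a little care is to notice that $f$ has been assumed continuous in $d_0$ rather than in $\varrho$, which is exactly why the inequality $d_0 \le \varrho$ is invoked as the bridge from Step~1 (phrased in $\varrho$) to the Skorokhod-metric conclusion. All of this machinery is packaged by the cited Billingsley theorem, so the proof is essentially a one-line invocation once Steps~1 and~2 are granted.
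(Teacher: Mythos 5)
Your proposal is correct and follows essentially the same route as the paper: the same triangle-inequality decomposition, with the second term killed by Step~2 and the first term controlled via Step~1 together with the observation that $d_0\le\varrho$; your explicit appeal to Billingsley's Theorem~4.4 is precisely the ``standard application'' the paper's own outline of step~3 prescribes. If anything, routing the first term through the converging-together theorem is slightly cleaner than the paper's direct assertion that $\E|f(X^n)-f(\widetilde{X}^n)|\to 0$ from pointwise $d_0$-continuity of $f$, since there both arguments vary with $n$.
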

\begin{proof}
The result follows from the triangular inequality
\begin{equation}\label{convEM}
|\E f(X^n)-\E f(X)|\le  \E|f(X^n)-f(\widetilde{X}^n)|+|\E
f(\widetilde{X}^n)-\E f(X)|.
\end{equation}
Since convergence in the uniform metric implies convergence in the
Skorokhod metric,   by Lemma \ref{lem-2.3b} we have, since $f$ is
continuous in this metric,
 $$
 \lim_{n\to\infty}\E|f(X^n)-f(\widetilde{X}^n)|=0.
 $$ Taking now the
 $\limsup$ in \eqref{convEM} and using the previous result of
 weak convergence of $\widetilde{X}^n$ to $X$ proves the step 3.
\end{proof}

\section{\bf Evaluation of ruin probability on a finite time interval
by simulations.}
\label{sec-5}

In this section   we evaluate numerically a ruin probability
$\mathsf{P}(\tau\le T)$ on a time interval $[0,T]$, where
$\tau=\{t:X_t=0\}$ by     Euler-Maruyama approximations. An
explicit formula to the ruin probability, as a function of
arguments $p, X_0, \mu, \sigma, T$,  is known in the case
$p=\frac{1}{2},   T=\infty$:
\begin{equation*}
\mathsf{P}(\tau<\infty)=\exp\Big(-\frac{2\mu}{\sigma^2}X_0\Big)
\end{equation*}
(see, e.g. \cite[p. 354]{Klebaner}), but not for finite $T$.

Naturally,     $\mathsf{P}(\tau\le T)$ is important in
applications, and we study it for different values of parameters
$p, X_0, \mu, \sigma, T$:
$$
p=
  \begin{cases}
    \frac{1}{2} &
     \\
    \frac{3}{4},&
  \end{cases}
  X_0=
  \begin{cases}
    \frac{1}{10} &
     \\
    \frac{1}{4}&
    \\
    1,
  \end{cases}
  \mu=
  \begin{cases}
    1 &
    \\
    -1, &
  \end{cases}
  \sigma=1,
T=\begin{cases}
    3 &
     \\
    9.&
  \end{cases}
$$

We combine the Euler-Maruyama simulation algorithm and the
Monte-Carlo technique with $10^{3}$ runs per point.

The basis for analysis is an obvious  formula $ \mathsf{P}(\tau\le
T)=\mathsf{P}(X_T=0) $. It allows us to deal with the distribution
function $F(x):=\mathsf{P}(X_T\le x)$ of $X_T$ instead of harder
to compute distribution function of $\tau$, $ \mathsf{P}(\tau\le
T). $ Notice that
$$
F(x)=
  \begin{cases}
    0 , & x<0 \\
    F(0)=\mathsf{P}(X_T=0)>0 , & x=0,
  \end{cases}
$$
that is, $F(0)-F(0-)>0$ and so the distribution function $F(x)$
has an atom at the point $\{0\}$.

The measure $\Q$ is supported on the space of continuous
functions. So the weak convergence of processes and measures  $
\Q^n\xrightarrow[n\to\infty] {\rm d_0}\Q $ implies weak
convergence of finite dimensional distributions, and in particular
marginals, $ X^n_T\xrightarrow[n\to\infty]{\rm law} X_T $. That is
if
 $ F_n(x)=\mathsf{P}(X^n_T\le x)  $ then
$\lim_{n\to \infty}F_n(x)=F(x)$ at any point of continuity of $F$.
Unfortunately    $0$, our point of interest, is an atom of $F$ and
we can not claim that
\begin{equation*}
\lim_{n\to \infty}F_n(0)=F(0)=\mathsf{P}(\tau\le T).
\end{equation*}

In view of this uncertainty, we give   approximations for lower
and upper bounds of $F(0)$ by using   the L\'evy metric (see e.g.
\cite{HT}):
$$
\mathcal{L}(F_n,F)=\inf\{h>0:F_n(x-h)-h\le F(x)\le F_n(x+h)+h; \
\forall \ x\}.
$$
It is known that weak convergence of distributions implies
convergence in the L\'evy metric $  \Q^n\xrightarrow[n\to\infty]
{\rm d_0}\Q \Rightarrow
 \lim\limits_{n\to\infty}\mathcal{L}(F_n,F)=0.$
\why{Though $\lim\limits_{n\to\infty}\mathcal{L}(F_n,F)=0$ does not
fish out atom it helps localize it. Namely we take}
$x=0$ and
a small suitable $\varepsilon_n$ , in each case determined
experimentally,  such that the interval
$[F_n(-\varepsilon_n)-\varepsilon_n,
F_n(\varepsilon_n)+\varepsilon_n]$ is  small enough and declare that
an estimate of $F(0)$ belongs to this interval. Such values of
$\varepsilon_n$ are pointed out below as last values in each Table.

Simulations for various values of parameters show   good fit of this
procedure.
\begin{small}
\begin{example}\label{ex:ex.4.1}
$p=\frac{1}{2}$, $\mu=-1$, $\sigma=1$, $X_0=\frac{1}{4}$, $T=3$
\begin{table}[ht]  \centering
 \begin{tabular}{|c|c|c|c|c|c|}
   \hline
       \cline{1-4} $\varepsilon$  & $\mathsf{P}(X^n_T\le -\varepsilon)-\varepsilon$
       &$\mathsf{P}(X^n_T\le \varepsilon)+\varepsilon$&
       $\mathsf{P}(X_T=0)$\\  \hline
$3\cdot10^{-6}$                & .9679  & .9738  & [.9679,  .9738]\\
$2\cdot10^{-6}$                & .9700  & .9738  & [.9700,  .9738] \\
\fbox{$10^{-6}=\varepsilon_n$} & .9738  & .9738  & [.9738,  .9738] \\
 \hline
 \end{tabular}
\caption{} \label{Tab1}
 \end{table}

\end{example}

\newpage
\begin{example}\label{ex:ex.4.2}
$p=\frac{1}{2}$, $\mu=+1$, $\sigma=1$, $X_0=\frac{1}{10}$, $T=9$
\begin{table}[ht]  \centering
 \begin{tabular}{|c|c|c|c|c|c|}
   \hline
       \cline{1-4} $\varepsilon$  & $\mathsf{P}(X^n_T\le -\varepsilon)-\varepsilon$
       &$\mathsf{P}(X^n_T\le \varepsilon)+\varepsilon$&
       $\mathsf{P}(X_T=0)$\\  \hline
$2\cdot10^{-6}$                      & .8166  & .8192  & [.8166, .8192] \\
$10^{-6}$                            & .8174  & .8192  & [.8174, .8192] \\
\fbox{$5\cdot10^{-7}=\varepsilon_n$} & .8182  & .8192  & [.8182, .8192]\\
 \hline
 \end{tabular}
\caption{} \label{Tab2}
 \end{table}
\end{example}
\begin{example}\label{ex:ex.4.3}
$p=\frac{1}{2}$, $\mu=+1$, $\sigma=1$, $X_0=\frac{1}{4}$, $T=9$
\begin{table}[ht]  \centering
 \begin{tabular}{|c|c|c|c|c|c|}
   \hline
       \cline{1-4} $\varepsilon$  & $\mathsf{P}(X^n_T\le -\varepsilon)-\varepsilon$
       &$\mathsf{P}(X^n_T\le \varepsilon)+\varepsilon$&
       $\mathsf{P}(X_T=0)$\\  \hline
$2\cdot10^{-6}$                      & .5960 & .5970  & [.5960, .5970] \\
$10^{-6}$                            & .5964 & .5970  & [.5964, .5970] \\
\fbox{$5\cdot10^{-7}=\varepsilon_n$} & .5970 & .5970  & [.5970, .5970]\\
 \hline
 \end{tabular}
\caption{} \label{Tab3}
 \end{table}
\end{example}

\begin{example}\label{ex:ex.4.4}
$p=\frac{1}{2}$, $\mu=+1$, $\sigma=1$, $X_0=1$, $T=9$
\begin{table}[ht]  \centering
 \begin{tabular}{|c|c|c|c|c|c|}
   \hline
       \cline{1-4} $\varepsilon$  & $\mathsf{P}(X^n_T\le -\varepsilon)-\varepsilon$
       &$\mathsf{P}(X^n_T\le \varepsilon)+\varepsilon$&
       $\mathsf{P}(X_T=0)$\\  \hline
$3\cdot10^{-6}$                & .1344  & .1348  & [.1344, .1348]\\
$2\cdot10^{-6}$                & .1346  & .1348  & [.1346, .1348] \\
\fbox{$10^{-6}=\varepsilon_n$} & .1346  & .1348  & [.1346, .1348] \\
 \hline
 \end{tabular}
\caption{} \label{Tab4}
 \end{table}
\end{example}

\begin{example}\label{ex:ex.4.5}
$p=\frac{3}{4}$, $\mu=+1$, $\sigma=1$, $X_0=\frac{1}{10}$, $T=9$
\begin{table}[ht]  \centering
 \begin{tabular}{|c|c|c|c|c|c|}
   \hline
       \cline{1-4} $\varepsilon$  & $\mathsf{P}(X^n_T\le -\varepsilon)-\varepsilon$
       &$\mathsf{P}(X^n_T\le \varepsilon)+\varepsilon$&
       $\mathsf{P}(X_T=0)$\\  \hline
$3\cdot10^{-8}$                        & .6040  & .6206  & [.6040, .6206]\\
$5\cdot10^{-9}$                        & .6040  & .6126  & [.6126, .6206] \\
\fbox{$2.5\cdot10^{-9}=\varepsilon_n$} & .6180  & .6206  & [.6206, .6206] \\
 \hline
 \end{tabular}
\caption{} \label{Tab5}
 \end{table}
\end{example}

\newpage
\begin{example}\label{ex:ex.4.6}
$p=\frac{3}{4}$, $\mu=+1$, $\sigma=1$, $X_0=\frac{1}{4}$, $T=9$
\begin{table}[ht]  \centering
 \begin{tabular}{|c|c|c|c|c|c|}
   \hline
       \cline{1-4} $\varepsilon$  & $\mathsf{P}(X^n_T\le -\varepsilon)-\varepsilon$
       &$\mathsf{P}(X^n_T\le \varepsilon)+\varepsilon$&
       $\mathsf{P}(X_T=0)$\\  \hline
$3\cdot10^{-8}$                        & .3794  & .3864   & [.3794, .3864]\\
$5\cdot10^{-9}$                        & .3816  & .3864   & [.3816, .3864] \\
\fbox{$2.5\cdot10^{-9}=\varepsilon_n$} & .3838  & .3864   & [.3838, .3864] \\
 \hline
 \end{tabular}
\caption{} \label{Tab.6}
 \end{table}
\end{example}

\begin{example}\label{ex:ex.4.7}
$p=\frac{3}{4}$, $\mu=+1$, $\sigma=1$, $X_0=1$, $T=9$
\begin{table}[ht]  \centering
 \begin{tabular}{|c|c|c|c|c|c|}
   \hline
       \cline{1-4} $\varepsilon$  & $\mathsf{P}(X^n_T\le -\varepsilon)-\varepsilon$
       &$\mathsf{P}(X^n_T\le \varepsilon)+\varepsilon$&
       $\mathsf{P}(X_T=0)$\\  \hline
$2\cdot10^{-8}$                        & .0752     & .0790    & [.0752, .0790]\\
$10^{-8}$                              & .0768     & .0790    & [.0768, .0790] \\
\fbox{$2.5\cdot10^{-9}=\varepsilon_n$} & .0782     & .0790    & [.0782, .0790] \\
 \hline
 \end{tabular}
\caption{} \label{Tab.7}
 \end{table}
\end{example}

\begin{example}\label{ex:ex.4.8}
$p=\frac{3}{4}$, $\mu=-1$, $\sigma=1$, $X_0=\frac{1}{3}$, $T=3$
\begin{table}[ht]  \centering
 \begin{tabular}{|c|c|c|c|c|c|}
   \hline
       \cline{1-4} $\varepsilon$  & $\mathsf{P}(X^n_T\le -\varepsilon)-\varepsilon$
       &$\mathsf{P}(X^n_T\le \varepsilon)+\varepsilon$&
       $\mathsf{P}(X_T=0)$\\  \hline
$10^{-8}$                                & .8536  & .8803  & [.8536, .0790]\\
$5\cdot 10^{-9}$                         & .8700  & .8803  & [.8700,  .0790] \\
\fbox{$2.5\cdot10^{-9}=\varepsilon_n$}   & .8757  & .8803  & [.8757, .0790] \\
 \hline
 \end{tabular}
\caption{} \label{Tab.8}
 \end{table}
\end{example}

\end{small}

\section{\bf  Existence and Uniqueness of solution in the CEV model}
\label{sec-4}
\begin{proposition}\label{pro-.A}
The It\^o's equation \eqref{eq:1.0} possesses a unique strong
nonnegative solution.
\end{proposition}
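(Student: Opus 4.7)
The plan is to apply the Yamada--Watanabe theorem: first prove pathwise uniqueness (using the H\"older estimate implicit in Lemma \ref{lem-pdiff}), then establish weak existence by a compactness argument, and conclude strong existence and uniqueness; nonnegativity will follow separately from pathwise uniqueness applied at zero.

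\textbf{Pathwise uniqueness.} Let $X$ and $Y$ be two solutions driven by the same Brownian motion with $X_0 = Y_0$. The drift $b(x) = \mu x$ is globally Lipschitz, and the diffusion coefficient satisfies $|\sigma(x^+)^p - \sigma(y^+)^p| \le \sigma |x-y|^p$: this follows from combining $|x^+ - y^+| \le |x-y|$ (equation \eqref{eq:+||}) with $|x^p - y^p| \le |x-y|^p$, both established inside the proof of Lemma \ref{lem-pdiff}. Since $p \ge \tfrac{1}{2}$, the modulus $\rho(u) = \sigma u^p$ satisfies $\int_{0+} \rho^{-2}(u)\,du = +\infty$, which is the Yamada--Watanabe integrability condition. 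I would then run the classical smoothing: pick $C^2$ approximations $\varphi_n$ of $|\cdot|$ with $|\varphi_n'|\le 1$ and $\varphi_n''$ supported on a shrinking dyadic neighborhood of $0$, apply It\^o's formula to $\varphi_n(X_t - Y_t)$, take expectations, and let $n \to \infty$; the diffusion contribution vanishes by the integrability condition, leaving the Lipschitz drift term, to which Gronwall's inequality gives $\mathbb{E}|X_t - Y_t| = 0$.

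\textbf{Weak existence.} Regularize the diffusion by globally Lipschitz approximations $\sigma_m$ converging uniformly on compacts to $\sigma(x^+)^p$. Classical Lipschitz SDE theory produces strong solutions $X^{(m)}$, and a second-moment computation paralleling Lemma \ref{lem-000} (in continuous time) gives $\sup_m \mathbb{E}\sup_{t \le T}(X^{(m)}_t)^2 < \infty$. Tightness of the laws of $X^{(m)}$ in $\mathbb{C}([0,T])$ then follows from a Kolmogorov moment bound on the increments, and any weak subsequential limit is identified as a weak solution of \eqref{eq:1.0} by verifying the martingale-problem conditions for the limiting drift and quadratic variation (which are continuous functionals of the path, since $\sigma(\cdot^+)^p$ is continuous). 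Combining with pathwise uniqueness, the Yamada--Watanabe theorem yields a unique strong solution.

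\textbf{Nonnegativity.} Set $\tau = \inf\{t \ge 0 : X_t = 0\}$ with $\inf\emptyset = +\infty$. Path continuity and $X_0 > 0$ give $X_t > 0$ on $[0,\tau)$. On the event $\{\tau \le T\}$ the identically zero process solves \eqref{eq:1.0} restarted from $0$ at time $\tau$, because both $\mu \cdot 0 = 0$ and $\sigma(0^+)^p = 0$. Pathwise uniqueness applied on $[\tau, T]$ then forces $X_t = 0$ for $t \in [\tau, T]$, and hence $X_t \ge 0$ throughout $[0,T]$.

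The main technical obstacle is the Yamada--Watanabe smoothing step at the borderline exponent $p = \tfrac{1}{2}$, where $\int_{0+} u^{-1}\,du = +\infty$ only barely diverges and the cut-off sequence $\varphi_n$ must be tuned on a carefully chosen dyadic scale. Weak existence via regularization and nonnegativity via uniqueness-at-zero are then routine given the material already established in Sections~2 and~3.
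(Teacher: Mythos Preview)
Your proof is correct but organizes the argument differently from the paper. Both invoke Yamada--Watanabe for uniqueness (the paper simply cites it, while you sketch the smoothing construction), but the existence arguments diverge. The paper builds a strong solution \emph{directly} by localization: it truncates the diffusion coefficient away from zero as $\sigma(i^{-1}\vee|x|)^p$, obtains strong solutions $X^i$ of the resulting Lipschitz SDEs, shows they agree up to the stopping times $\vartheta_i=\inf\{t:X^i_t=i^{-1}\}$, and glues them together up to $\tau=\lim_i\vartheta_i$. Your route instead produces a \emph{weak} solution by tightness of Lipschitz-regularized approximations and then upgrades to a strong solution via the Yamada--Watanabe machinery. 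The paper's localization is shorter and avoids the compactness/martingale-problem identification step, at the cost of being somewhat telegraphic about what happens at and after $\tau$; your version is more self-contained and, unlike the paper, actually supplies the nonnegativity argument (zero is an absorbing solution, so pathwise uniqueness pins $X$ there after $\tau$), which the paper asserts but does not prove.
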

\begin{proof}
 Define a sequence of processes indexed by integers $i$, $i> 1/X_0$, $(X_t^i)_{t\ge 0}$, such
 that $X_t^i$ is a strong solutions to the following sde
\begin{equation}\label{eq:i}
dX^i_t=\mu X^i_tdt+\sigma(i^{-1}\vee |X^i_t|)^pdW_t,\;\; X^i_0=X_0.
\end{equation}
The diffusion coefficient  $ \sigma(i^{-1}\vee |x|)^p $ is Lipschitz
continuous therefore $X_t^i$ is the unique strong solution of
\eqref{eq:i}. Set $\vartheta_i=\inf\{t:X^i_t= i^{-1}\}$. Note that
for $t\le \vartheta_i$
$$
X_t^{i+1}=X_t^i,
$$
and it follows that  $\vartheta_{i+1}>\vartheta_i$. A strong
solution to \eqref{eq:1.0} is constructed by a natural prolongation
$$
 X_{t\wedge\tau}:=\sum\limits_{i\ge
n}X^i_{\vartheta_i}I_{\{\vartheta_i\le t<\vartheta_{i+1}\}}, \
\tau=\lim_{i\to\infty}\vartheta_i.
$$

Finally, Yamada-Watanabe's theorem (see, e.g.  \cite[Rogers and
Williams, p. 265]{RW}) guaranties the   uniqueness of the strong
solution of equation \eqref{eq:1.0}, because  the H\"older parameter
$p\ge \frac{1}{2}$ .
\end{proof}

\end{document}